\documentclass[11pt,a4paper,reqno]{amsart}
\usepackage[T1]{fontenc}
\usepackage[utf8]{inputenc}
\usepackage[english]{babel}
\usepackage{lmodern}
\usepackage{amsmath,amssymb,amsthm,mathtools}
\usepackage[margin=25mm]{geometry}
\usepackage{microtype}
\usepackage[unicode,colorlinks=true,allcolors=blue]{hyperref}
\hypersetup{
  pdftitle={A one-sided Auslander--Lyndon theorem},
  pdfauthor={Artur Tursunbaev},
}
\makeatletter
\def\@settitle{\begin{center}\normalfont\large\bfseries\@title\end{center}}
\makeatother
\numberwithin{equation}{section}
\newtheorem{theorem}{Theorem}[section]
\newtheorem{lemma}[theorem]{Lemma}

\newtheorem*{KSlemma}{Lemma}
\theoremstyle{remark}
\newtheorem{remark}[theorem]{Remark}
\newcommand{\Z}{\mathbb Z}
\newcommand{\Q}{\mathbb Q}
\newcommand{\core}{\operatorname{core}}
\newcommand{\rank}{\operatorname{rank}}

\title[A one-sided Auslander--Lyndon theorem]
{A one-sided Auslander--Lyndon theorem}
\author{Artur Tursunbaev}
\address{St.~Petersburg State University,
Universitetskii pr.~28,
St.~Petersburg 198504, Russia}
\email{\href{mailto:tursunbaev.art@gmail.com}{tursunbaev.art@gmail.com}}
\subjclass[2020]{Primary 20E05; Secondary 20E07, 20F14, 20J05}
\keywords{Free groups, derived subgroups, normal subgroups,
Auslander--Lyndon theorem, Schreier graphs, relation modules}
\begin{document}
\begin{abstract}
We prove that if $F$ is a noncyclic free group,
$R\triangleleft F$ and $S\le F$, then the inclusion $R'\le S'$ implies
$R\le S$. Thus the normality hypothesis on the second subgroup
can be omitted from the Auslander--Lyndon theorem. No restrictions
are imposed on ranks or indices.
\end{abstract}
\maketitle

\section{Introduction}

In 1955, Auslander and Lyndon~\cite[Theorem~3]{AL}
proved that for normal subgroups $R,S$ of a nonabelian free
group $F$, the inclusion $R'\le S'$ of derived subgroups implies $R\le S$.

The two normality hypotheses play different roles in this statement.
Dunwoody~\cite[p.~153]{Dun} constructed subgroups
$R,S\le F$ such that $S\triangleleft F$ and $R'\le S'$, but
$R\nleq S$. Thus the normality hypothesis on $R$ cannot be omitted
even when $S$ is normal.

The question of whether the normality hypothesis on $S$ can be omitted
while retaining the hypothesis $R\triangleleft F$ was explicitly
listed as open in 1999~\cite[comment on Problem~F15]{BMS1999}
and was subsequently restated in~\cite[p.~10]{KMS}.

We answer this question in the affirmative.
We use the following conventions:
\[
[a,b]=a^{-1}b^{-1}ab,
\qquad H'=[H,H].
\]

\begin{theorem}\label{thm:main}
Let $F$ be a noncyclic free group of arbitrary rank, and let
$R\triangleleft F$ and $S\le F$. Then
\[
R'\le S'\quad\Longrightarrow\quad R\le S.
\]
\end{theorem}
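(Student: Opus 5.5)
The plan is to reduce the inclusion $R'\le S'$ to a statement about $1$-cycles in the Schreier graph of $S$, and to derive a contradiction from the assumption that some $r\in R$ lies outside $S$.

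First I dispose of trivial cases and record the basic structure. If $R=1$ there is nothing to prove. Otherwise $R$ is a nontrivial normal subgroup of the noncyclic free group $F$, so $R$ is a noncyclic free group and $R'\ne 1$. Since $R'$ is characteristic in $R\triangleleft F$, we have $R'\triangleleft F$. Hence $R'\le S'\le S$ gives $R'\le \core(S)=\bigcap_g S^g$, so every element of $R'$ reads a closed path from \emph{every} vertex of the Schreier graph $\Gamma=\Gamma(S)$ of $S$ (vertices are the cosets $Sg$, with a fixed free basis of $F$).

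Next I use the standard homological description of $S'$. A word $w$ lies in $S$ iff it reads a closed path at the base vertex $S$. For a closed path $\gamma$ in a graph, let $c(\gamma)$ be its signed edge-count function. It is a $1$-cycle, and $1$-cycles in a graph are determined by these edge coefficients; this holds for arbitrary graphs, with no finiteness or spanning-tree choices needed. Since $S/S'\cong H_1(\Gamma)$, a word $w\in S$ lies in $S'$ iff $c$ of its path at $S$ vanishes. For $v$ a vertex and $x\in R'$, write $c_v(x)$ for the cycle read by $x$ from $v$. Then $x\mapsto c_v(x)$ is a homomorphism on $R'$, because all these paths are closed.

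Now suppose some $r\in R$ is not in $S$, and put $v=Sr^{-1}\neq S$. For every $x\in R'$ we have $[r,x]=r^{-1}x^{-1}rx\in R'\le S'$. Reading this word from $S$, the segment $r^{-1}$ goes to $v$, the segment $x^{-1}$ is a closed loop at $v$, the segment $r$ returns to $S$, and the segment $x$ is a closed loop at $S$. The two $r$-segments cancel edge by edge, so the vanishing of the total cycle yields the identity
\[
c_S(x)=c_v(x)\qquad\text{for all }x\in R'.
\]
The theorem therefore reduces to a purely combinatorial claim: for distinct vertices $S\neq v$ of $\Gamma$, there is some $x\in R'$ with $c_S(x)\ne c_v(x)$. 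This is the main obstacle.

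To prove the claim I would work with the homomorphism $x\mapsto c_S(x)-c_v(x)$ on $R'$ and take commutators $x=[p,q]$ with $p,q\in R$. Expanding, $c_u([p,q])$ is the sum of the four path contributions of $p^{-1},q^{-1},p,q$ starting at $u$, $up^{-1}$, $up^{-1}q^{-1}$, $up^{-1}q^{-1}p$. Because $R$ is normal and noncyclic, I can choose $p\in R$ whose path from $S$ crosses a prescribed edge $e$ of $\Gamma$ at $S$. I would then choose $q\in R$ to be a large power or a suitable conjugate so that the translated copies of these paths starting from $v$ avoid cancelling that crossing. The noncyclicity of $F$ is what supplies enough room to separate the two base vertices: one can take $p=w^{-1}a^{N}w\cdot(\dots)$-type elements of $R$ with $a$ a basis letter, and compare how often the edge $e$ is crossed in each.

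If a direct choice proves awkward, I would fall back on a counting argument. The vector $(c_S-c_v)$ applied to $[p,q]$ is a bilinear-type expression in the paths of $p$ and $q$. Its identical vanishing for all $p,q\in R$ would force the action of $R$ on the vertex set to identify the local structure of $\Gamma$ at $S$ and at $v$, which in turn would force $v=S$. Either route contradicts $v\neq S$, which gives $R\le S$.
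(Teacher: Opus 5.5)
Your Schreier-graph computation is correct, but the identity it produces is vacuous, so nothing has been reduced. Under the hypothesis, both sides of $c_S(x)=c_v(x)$ vanish for \emph{every} vertex $v$.
\begin{itemize}
\item For $x\in R'$, the equality $c_S(x)=0$ is exactly the hypothesis $R'\le S'$.
\item Reading $gxg^{-1}$ from $S$ gives $c_{Sg}(x)=c_S(gxg^{-1})$.
\item This is $0$ because $gxg^{-1}\in R'\le S'$, using $R'\triangleleft F$.
\end{itemize}
So the relation you extract from $[r,x]\in S'$ reads $0=0$. The element $r\notin S$ enters only by supplying a second vertex.

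Consequently your ``purely combinatorial claim'' is, for any vertex $v$, equivalent to $R'\not\le S'$. You have restated the contrapositive of the theorem. As you formulate it, for arbitrary distinct vertices $S\neq v$, the claim is in fact false. Take $R=S=\ker(F\to\Z/2\Z)$: then $\Gamma$ has two vertices and $c_u\equiv 0$ on $R'$ at both.

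Hence any choice of $x=[p,q]$ that uses only $v\ne S$ together with normality and noncyclicity of $R$, as your sketch does, cannot succeed. Your fallback, that identical local structure at $S$ and $v$ forces $v=S$, is also false. When $S\triangleleft F$, $\Gamma$ is a Cayley graph of $F/S$ and is vertex-transitive.

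The missing idea is to run your conjugation trick on elements with \emph{nonzero} cycle, namely $c\in(R\cap S)\setminus S'$, rather than on $x\in R'$. Work in $E=RS$. From $rcr^{-1}c^{-1}\in R'\le S'$, the word for $c$ reads the same nonzero cycle $\sigma_S(c)$ from every vertex $Sr$. Every vertex of $\Gamma_S$ has this form because $E=SR$. Counting crossings of one edge in the support then bounds $[E:S]$ by the length of the word.

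This still leaves two substantial cases that your edge-crossing heuristics do not address.
\begin{itemize}
\item If $R\cap S\le S'$: the paper uses $S/(R\cap S)\cong E/R$ to show $H_1(S;\Q)\to H_1(E;\Q)$ is injective. Karrass--Solitar's finite-index theorem and the Schreier index formula then force $S=E$.
\item If $[E:S]<\infty$: the paper needs an algebraic argument. It uses the rational relation module $V\cong\Q\oplus(\Q\Omega)^{n-1}$ for $\Omega=E/\core_E(S)$. It derives $VI_A^2\subseteq VI_P$ from $[[N,R],R]\le S'$, and then uses $I_A^2=I_A$ to get $A\le P$.
\end{itemize}
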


A closely related question concerning the equality of subgroups of a free group
with the same nontrivial derived subgroup is considered in~\cite{Tursunbaev}.

\section{Chains in Schreier graphs}

Fix a free basis $X$ of a free group $E$. For $H\le E$,
let $\Gamma_H$ denote the Schreier graph with vertices $Hg$ and
positively oriented edges
\[
Hg\xrightarrow{x}Hgx,\qquad x\in X.
\]
For an oriented edge $e$, let $o(e)$ and $t(e)$ denote its
initial and terminal vertices.

If $w\in H$, then the path in $\Gamma_H$ labeled by $w$
and starting at $H$ is closed. Denote by
\[
\sigma_H(w)\in C_1(\Gamma_H;\mathbb Z)
\]
its associated $1$-chain, with signs determined by the direction of traversal.
Free reduction of the word does not change $\sigma_H(w)$, and for
$u,v\in H$ we have
\[
\sigma_H(uv)=\sigma_H(u)+\sigma_H(v).
\]
Thus we obtain a well-defined homomorphism
\[
\sigma_H:H\longrightarrow C_1(\Gamma_H;\mathbb Z).
\]
Since
\[
H_1(\Gamma_H;\mathbb Z)=Z_1(\Gamma_H;\mathbb Z)
\subseteq C_1(\Gamma_H;\mathbb Z),
\]
the standard identification $\pi_1(\Gamma_H,H)\cong H$, followed by
abelianization, gives
\begin{equation}\label{eq:sigma-kernel}
\ker\sigma_H=H'.
\end{equation}

\begin{lemma}\label{lem:support}
Let $E$ be a free group, and suppose that $R\triangleleft E$, $S\le E$,
$E=RS$ and $R'\le S'$. Let
$c\in(R\cap S)\setminus S'$ and let
\[
w=s_1\cdots s_\ell,\qquad s_i\in X^{\pm1},
\]
be a word representing $c$ with respect to a free basis $X$
of $E$. Then
\[
[E:S]\le\ell.
\]
\end{lemma}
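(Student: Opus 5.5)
The plan is to show that the closed walk of $w$ has the \emph{same} nonzero $1$-chain in $\Gamma_S$ no matter which vertex it starts from. Then a fixed edge in the support of that chain must be crossed by every such walk, and a counting argument bounds the number of vertices, which is $[E:S]$, by $\ell$.

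\emph{Step 1 (invariance under conjugation by $R$).} Fix $r\in R$. Since $c\in R$, we have $[c^{-1},r]=c\,r^{-1}c^{-1}r\in R'\le S'\le S$. As $c\in S$, it follows that $r^{-1}c^{-1}r\in S$, and hence $r^{-1}cr\in S$. Applying the homomorphism $\sigma_S$ and using \eqref{eq:sigma-kernel} gives $\sigma_S(c)+\sigma_S(r^{-1}c^{-1}r)=0$, that is,
\[
\sigma_S(r^{-1}cr)=\sigma_S(c)\neq 0,
\]
where the nonvanishing is again \eqref{eq:sigma-kernel}, because $c\notin S'$.

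\emph{Step 2 (walks from other vertices).} Read $r^{-1}wr$ from the base vertex $S$. The $r^{-1}$-segment and the final $r$-segment traverse the same path in opposite directions, so their contributions cancel. What remains is the chain of the $w$-walk starting at $Sr^{-1}$; since $r^{-1}cr\in S$, this walk is closed. Now $E=RS=SR$ because $R$ is normal, so every coset $Sg$ can be written as $Sr^{-1}$ with $r\in R$. Consequently, for every vertex $v$ of $\Gamma_S$ the $w$-walk $p_v$ starting at $v$ is closed, and its chain equals the fixed nonzero chain $\tau=\sigma_S(c)$.

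\emph{Step 3 (counting).} Choose an edge $e$ in the support of $\tau$. For each vertex $v$, the walk $p_v$ must traverse $e$, since otherwise the coefficient of $e$ in its chain would be $0$. Let $i(v)\in\{1,\dots,\ell\}$ be the first step at which $p_v$ traverses $e$.

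I claim $v\mapsto i(v)$ is injective. The letter $s_i$ determines both the label of $e$ and the direction of traversal ($s_i=x$ means $e$ is crossed from $o(e)$, and $s_i=x^{-1}$ means it is crossed from $t(e)$). So the vertex reached after the prefix $s_1\cdots s_{i-1}$ is determined by $i$. Reading that prefix backwards in $\Gamma_S$, which is deterministic because $\Gamma_S$ is a Schreier graph, then recovers $v$. Hence the number of vertices, $[E:S]$, is at most $\ell$.

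The main obstacle is Step~2: correctly identifying $\sigma_S(r^{-1}cr)$ with the chain of the $w$-walk at $Sr^{-1}$, and making sure every vertex arises this way. The latter is exactly where the hypothesis $E=RS$ enters. The remaining steps are bookkeeping.
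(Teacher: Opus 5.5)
Your proof is correct and follows essentially the same route as the paper. Conjugation by elements of $R$ preserves $\sigma_S(c)$, $E=SR$ makes every vertex the start of a closed $w$-walk with that same nonzero chain, and a fixed edge in its support gives an injection from the vertices into $\{1,\dots,\ell\}$; the only cosmetic differences are that you conjugate by $r^{-1}$ instead of $r$ and use the first crossing of $e$ to define the injection explicitly.
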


\begin{proof}
Consider the Schreier graph $\Gamma_S$ with respect to the basis $X$.
By~\eqref{eq:sigma-kernel},
\[
\alpha=\sigma_S(c)\ne0.
\]
For every $r\in R$, we have
\[
rcr^{-1}c^{-1}\in R'\le S'.
\]
Hence $rcr^{-1}\in S$ and
$\sigma_S(rcr^{-1})=\sigma_S(c)$. The path labeled by $w$
and starting at the vertex $Sr$ is closed, since $Src=Sr$.
Denote its $1$-chain by $\beta_r$.

If $u$ is a word representing $r$, then the path starting at $S$
and labeled by $uwu^{-1}$ has the form
\[
S\xrightarrow{u}Sr\xrightarrow{w}Sr\xrightarrow{u^{-1}}S.
\]
The subpaths corresponding to $u$ and $u^{-1}$ are reverses of one another,
so their chains cancel. Therefore,
\[
\beta_r=\sigma_S(rcr^{-1})=\alpha.
\]
Since $E=RS=SR$, every vertex of $\Gamma_S$ has the form $Sr$.
Thus the path starting at any vertex and labeled by $w$
is closed and has the same nonzero chain $\alpha$.

Fix a positively oriented edge $e$ whose coefficient
in $\alpha$ is nonzero. Each of these paths traverses
$e$ at least once, in one direction or the other.

For the word
\[
w=s_1\cdots s_\ell,\qquad s_j\in X^{\pm1},
\]
set $p_0=1$ and $p_j=s_1\cdots s_j$ for $1\le j\le\ell$.
Suppose that the edge $e$ is labeled by $x\in X$.
If the $j$th step of the path starting at a vertex $v$ traverses $e$
in the positive direction, then $s_j=x$ and
\[
vp_{j-1}=o(e).
\]
If $e$ is traversed in the opposite direction,
then $s_j=x^{-1}$ and
\[
vp_{j-1}=t(e).
\]
Consequently,
\[
V(\Gamma_S)\subseteq
\{\,o(e)p_{j-1}^{-1}:1\le j\le\ell,\ s_j=x\,\}
\cup
\{\,t(e)p_{j-1}^{-1}:1\le j\le\ell,\ s_j=x^{-1}\,\}.
\]
Thus each $j$ corresponds to at most one possible starting
vertex, and hence
\[
[E:S]=|V(\Gamma_S)|\le\ell.\qedhere
\]
\end{proof}

\section{Free factors and first homology}

A subgroup $A\le B$ is called a free factor of $B$ if
\[
B=A*C
\]
for some subgroup $C\le B$.

\begin{KSlemma}[{Karrass--Solitar~\cite[Lemma~1]{KarrassSolitar}}]
Let $G$ be a group, and let $H\le\overline H\le G$ and $K\le G$.
If $H$ is a free factor of $\overline H$, then
$H\cap K$ is a free factor of $\overline H\cap K$.
\end{KSlemma}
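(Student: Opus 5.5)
The plan is to read this off from the Bass--Serre (Kurosh) description of subgroups of a free product, choosing the decomposition of $\overline H\cap K$ so that one of its factors is exactly $H\cap K$.

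\emph{Setup.} Write $\overline H=H*C$ and let $T$ be the Bass--Serre tree of this splitting. Concretely, $T$ is the tree of the graph of groups with two vertices carrying $H$ and $C$ and one edge carrying the trivial group. Let $v_H$ denote the vertex whose stabilizer is exactly $H$. Since edge stabilizers in $T$ are trivial, this is a free-product tree. Put $L=\overline H\cap K$, which acts on $T$ by restriction.

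\emph{Structure theorem.} By the structure theorem of Bass--Serre theory, $L\cong\pi_1(\mathcal G)$, where $\mathcal G$ is the quotient graph of groups $L\backslash T$. Its vertex groups are the stabilizers $L\cap\operatorname{Stab}(\tilde v)$ of chosen lifts $\tilde v$, and its edge groups are trivial. For a graph of groups with trivial edge groups, with maximal tree $Y_0$ in the quotient graph $Y=L\backslash T$, this gives
\[
L=\Bigl(\mathop{\ast}_{v\in V(Y)} L_{\tilde v}\Bigr)*\pi_1(Y).
\]
Here $\pi_1(Y)$ is free on the edges outside $Y_0$, and $L_{\tilde v}$ is the stabilizer of the chosen lift of $v$.

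\emph{Choice of lifts.} The key step is to choose the lifts well. One lifts a maximal tree $Y_0$ of $Y$ to a subtree of $T$. This lift can be started at any prescribed preimage of a given vertex, since one just builds it outward from that vertex. So I choose the lift of the image of $v_H$ to be $v_H$ itself. The corresponding vertex group is then
\[
L\cap\operatorname{Stab}(v_H)=\overline H\cap K\cap H=H\cap K,
\]
because $H\le\overline H$. Hence $H\cap K$ is one of the free factors in the displayed decomposition, so $\overline H\cap K=(H\cap K)*D$, where $D$ is the free product of all the remaining factors.

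\emph{Main obstacle.} The only delicate point is this freedom in choosing lifts. Kurosh's theorem as usually stated only gives factors of the form $L\cap gHg^{-1}$ for some double coset representatives $g$. One must make sure the representative of the double coset $L\cdot1\cdot H$ may be taken to be $1$. In the lifted-tree formulation above this is immediate, because the lift of the maximal tree can be anchored at $v_H$. Alternatively, one can invoke the classical Kurosh theorem in the form where double coset representatives are arbitrary.
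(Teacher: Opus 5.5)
The paper gives no proof of this lemma. It quotes it from Karrass--Solitar and uses it as a black box, so there is no in-paper argument to compare with.

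Your proof is correct. It is the Bass--Serre form of the usual argument via the Kurosh subgroup theorem, in which the double coset $(\overline H\cap K)\cdot 1\cdot H$ is represented by $1$. The tree language makes that choice of representative transparent, which the double-coset version tends to obscure.

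A few points should be stated explicitly in a written version.

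\begin{itemize}
\item The structure theorem requires an action without inversions. This holds because $T$ is bipartite, with vertices $gH$ and $gC$, and $\overline H$, hence $L$, preserves the two types.
\item The anchoring works as follows. If $T_0\subseteq T$ is any lift of $Y_0$ and $\tilde v\in T_0$ lies over the image of $v_H$, then $v_H=g\tilde v$ for some $g\in L$. Hence $gT_0$ is a lift passing through $v_H$.
\item In the structure theorem, the isomorphism $\pi_1(\mathcal G,Y_0)\to L$ restricts to the inclusion on each vertex group $L_{\tilde v}$. Therefore $\overline H\cap K=(H\cap K)*D$ is an internal free-product decomposition of $\overline H\cap K$, which is what ``free factor'' requires, not merely an abstract isomorphism.
\item You only use $\operatorname{Stab}(v_H)=H$. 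So it does not matter which vertex with this stabilizer you pick; it is simplest to take the coset $1\cdot H$.
\end{itemize}
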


Now let $E$ be a free group with a free factor $L$,
and let $H\le E$. Applying the preceding lemma to
$L\le E$ and the subgroup $H$, we see that $H\cap L$
is a free factor of $E\cap H=H$.
For some subgroup $K\le H$, we have
\[
H=(H\cap L)*K.
\]
By the universal property of the free product, there exists
a homomorphism
\[
\rho:H\longrightarrow H\cap L,
\qquad
\rho|_{H\cap L}=\operatorname{id}_{H\cap L},
\qquad
\rho(k)=1\quad(k\in K).
\]

Since $H\cap L\le H$, we have $(H\cap L)'\le H'$.
Together with $(H\cap L)'\le H\cap L\le L$, this gives
\[
(H\cap L)'\le H'\cap L.
\]

Conversely, let $x\in H'\cap L$. Then $x\in H\cap L$,
so $\rho(x)=x$. Since $\rho$ is a homomorphism and
$\rho(H)=H\cap L$, we obtain
\[
x=\rho(x)\in\rho(H')=\rho(H)'=(H\cap L)'.
\]
Thus $H'\cap L\le(H\cap L)'$, and hence
\begin{equation}\label{eq:free-factor}
H'\cap L=(H\cap L)'.
\end{equation}

Moreover, for the inclusion map
$i:H\cap L\hookrightarrow H$, we have
$\rho\circ i=\operatorname{id}_{H\cap L}$. Therefore,
\[
H_1(\rho;\Q)\circ H_1(i;\Q)
=\operatorname{id}_{H_1(H\cap L;\Q)},
\]
and consequently
\[
H_1(i;\Q):H_1(H\cap L;\Q)\longrightarrow H_1(H;\Q)
\]
is a monomorphism.

\begin{lemma}\label{lem:homology}
Let $E$ be a noncyclic free group, and let $H\le E$
be a subgroup containing a nontrivial normal subgroup of $E$.
If the inclusion map $i:H\hookrightarrow E$ induces a monomorphism
\[
H_1(i;\Q):H_1(H;\Q)\longrightarrow H_1(E;\Q),
\]
then $H=E$.
\end{lemma}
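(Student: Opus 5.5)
We must prove: $H\le E$ containing a nontrivial normal subgroup $N$ of $E$, with $H_1(H;\Q)\to H_1(E;\Q)$ injective, forces $H=E$. The plan is to argue by contradiction: assume $H\ne E$ and produce a nonzero class in $H_1(H;\Q)$ that dies in $H_1(E;\Q)$. Such a class is given by an element $h\in H\setminus H'$, or at least with $h^m\notin H'$ for all $m\ge1$, that lies in $E'$. The element should be a commutator-type element built from $N$.

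\textbf{Key steps.} Pick $1\ne n\in N$. Since $E$ is noncyclic, choose $g\in E$ not commuting with $n$; then $c=[n,g]=n^{-1}(g^{-1}ng)$ lies in $N\le H$ and in $E'$, and $c\ne1$. If $H'$ contained no nonzero power of $c$, the class of $c$ in $H_1(H;\Q)=H/H'\otimes\Q$ would be nonzero but would map to $0$ in $H_1(E;\Q)$, contradicting injectivity. So the work is to choose $n,g$ so that $c^m\notin H'$ for all $m\ge1$, using $H\ne E$.

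\textbf{Detecting $c\notin H'$.} Work in the Schreier graph $\Gamma_H$ with the map $\sigma_H$ and \eqref{eq:sigma-kernel}. Here $\sigma_H(c^m)=m\,\sigma_H(c)$ and $C_1$ is torsion-free, so it suffices to get $\sigma_H(c)\ne0$. Write $c=n^{-1}\cdot g^{-1}ng$. Then $\sigma_H(c)=\sigma_H(g^{-1}ng)-\sigma_H(n)$. Both terms are closed-path chains of the word for $n$: one read at the base vertex $H$, the other read at the vertex $Hg^{-1}$ (conjugation cancels as in the proof of Lemma~\ref{lem:support}). Since $N$ is normal in $E$, $n$ labels a closed path at every vertex of $\Gamma_H$. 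We would denote by $\beta_v$ the chain of the $n$-loop at $v$, so that $\sigma_H(c)=\beta_{Hg^{-1}}-\beta_H$. It therefore suffices to show that $v\mapsto\beta_v$ is not constant on $V(\Gamma_H)$. We could also replace $n$ by any element of $N$, for instance by a cyclically reduced conjugate or a high power.

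\textbf{Main obstacle.} The crux is showing that the loop chains $\beta_v$ cannot all coincide when $H\ne E$. This step is essentially the content of Lemma~\ref{lem:support}'s mechanism. If all $\beta_v$ equal some $\alpha$, two cases arise. If $\alpha\ne0$, the counting argument of Lemma~\ref{lem:support} gives $[E:H]\le\ell$, where $\ell$ is the length of $n$. Replacing $n$ by $n^k$ multiplies $\alpha$ by $k$ but also $\ell$; instead I would vary $n$ among $n$ and $n x^{j} n x^{-j}$-type elements of $N$ to obtain contradictory constraints. In the finite-index case I would use rank counting: Schreier's formula gives $\rank H=1+[E:H](\rank E-1)>\rank E$ when $[E:H]>1$ and $\rank E\ge2$. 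Then $H_1(H;\Q)$ has larger dimension than $H_1(E;\Q)$, which already contradicts injectivity; this works when $\rank E$ is finite. If $\alpha=0$ for every $n\in N$, then $N\le H'$ by \eqref{eq:sigma-kernel}. In the infinite-index case, and for infinite rank, I would reduce to a finitely generated free factor $L$ of $E$ containing $n,g$, via \eqref{eq:free-factor} and the Karrass--Solitar lemma. The inclusion $H\cap L\hookrightarrow H$ is $H_1$-injective, so injectivity passes to $H\cap L\le L$, and $N\cap L$ is normal in $L$. Then $H\cap L$ being of infinite index would be handled by a Greenberg-type fact: a finitely generated subgroup containing a nontrivial normal subgroup has finite index. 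I expect this finite-index/infinite-index dichotomy, and verifying $H\cap L\ne L$ survives the reduction, to be the delicate point.
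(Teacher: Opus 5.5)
There is a genuine gap. Your commutator/Schreier-graph strategy yields a contradiction only when, for some $n\in N$, the loop chains $\beta_v$ fall into one of two cases:
\begin{itemize}
\item they are non-constant, and then $[n,g]$ gives a nonzero class in $\ker H_1(i;\Q)$;
\item they are constant and nonzero, and then the counting from Lemma~\ref{lem:support} gives $[E:H]<\infty$, after which Schreier's formula finishes (with a free-factor reduction if $\rank E=\infty$).
\end{itemize}
The leftover case is $\sigma_H(n)=0$ for all $n\in N$, i.e.\ $N\le H'$, and there this method gives nothing. Since $N$ is normal, every loop chain $\beta_{Hg}=\sigma_H(gng^{-1})$ then vanishes. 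So no element of $N$ produces a nonzero class: not commutators, not products such as $nx^jnx^{-j}$, not powers.

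For that case you fall back on a finite-rank free factor $L$ and a Greenberg-type theorem. That theorem needs $H\cap L$ to be \emph{finitely generated}, and nothing in your proposal establishes this; you leave the finite/infinite-index dichotomy as the open ``delicate point''. By contrast, $H\cap L\ne L$ is not delicate at all: just put some $g\notin H$ into $L$.

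The missing idea is a one-line dimension count, and it makes the chain machinery unnecessary. You already note that $H_1(H\cap L;\Q)\to H_1(L;\Q)$ is injective. Since $H_1(L;\Q)\cong\Q^n$ with $n=\rank L$, this gives $\rank(H\cap L)=\dim_{\Q}H_1(H\cap L;\Q)\le n$, so $H\cap L$ is finitely generated. Choose $L$ to contain some $1\ne d\in N$; then $N\cap L$ is a nontrivial normal subgroup of $L$ lying in $H\cap L$. Karrass--Solitar (Greenberg) then gives $m=[L:H\cap L]<\infty$, and Schreier's formula $1+m(n-1)=\rank(H\cap L)\le n$ with $n\ge2$ forces $m=1$. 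Taking $L$ to contain an arbitrary $g\in E$ as well yields $g\in H$ directly. This needs no contradiction hypothesis, no case split on the index, and no commutator $[n,g]$; it is exactly the paper's proof.
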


\begin{proof}
Let $1\ne D\triangleleft E$, $D\le H$, and $1\ne d\in D$.
For an arbitrary $g\in E$, choose a free factor $L\le E$
of finite rank $n\ge2$ containing $d$ and $g$, and set
$H_0=H\cap L$.

Since $H_0$ is a free factor of $H$, the composite
\[
H_1(H_0;\Q)\longrightarrow H_1(H;\Q)
\longrightarrow H_1(E;\Q)
\]
is injective. It factors through $H_1(L;\Q)$, so
the map
\[
H_1(H_0;\Q)\longrightarrow H_1(L;\Q)
\]
is also injective. Since $L$ is a free group of rank $n$,
we have $H_1(L;\Q)\cong\Q^n$, and since $H_0$ is also free, we obtain
\[
\rank H_0
=\dim_{\Q}H_1(H_0;\Q)
\le \dim_{\Q}H_1(L;\Q)
=n.
\]
In particular, $H_0$ is finitely generated.

Moreover,
\[
1\ne D\cap L\triangleleft L,
\qquad D\cap L\le H_0.
\]
By~\cite[Theorem~2]{KarrassSolitar}, we have $[L:H_0]<\infty$.
Let $m=[L:H_0]$. The Schreier index formula~\cite{MKS} gives
\[
1+m(n-1)=\rank H_0\le n.
\]
Hence $m=1$, so $g\in L=H_0\le H$.
Since $g$ was arbitrary, we conclude that $H=E$.
\end{proof}

\section{The finite-index case}

\begin{lemma}\label{lem:finite-index}
Let $E$ be a noncyclic free group, and suppose that
$R\triangleleft E$, $S\le E$ and $[E:S]<\infty$.
If $R'\le S'$, then $R\le S$.
\end{lemma}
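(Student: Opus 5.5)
The plan is to reduce to the situation of Lemma~\ref{lem:homology} inside the subgroup $E_1=RS$, using the transfer together with the conjugation invariance already exploited in the proof of Lemma~\ref{lem:support}.

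\textbf{Reduction.} Put $E_1=RS$. Since $S\le E_1$ and $[E:S]<\infty$, the subgroup $E_1$ has finite index in $E$. Hence $E_1$ is a noncyclic free group, $R\triangleleft E_1$ and $E_1=RS$. It suffices to prove $S=E_1$, because then $R\le S$. The subgroup $N=\core_{E_1}(S)$ is normal of finite index in the infinite group $E_1$, so it is nontrivial, and $N\le S$. Thus Lemma~\ref{lem:homology}, applied with $E_1$ in place of $E$ and $H=S$, gives $S=E_1$ once we show that
\[
H_1(S;\Q)\longrightarrow H_1(E_1;\Q)
\]
is injective.

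\textbf{Injectivity via the transfer.} Let $n=[E_1:S]$. Since $E_1=SR$, we may choose right coset representatives $t_1,\dots,t_n$ of $S$ in $E_1$ lying in $R$. Consider the transfer $T:H_1(E_1;\Q)\to H_1(S;\Q)$. For $s\in S$, the composite of inclusion and transfer sends the class of $s$ to a sum of classes of elements $t_i s^{k_i} t_{\pi(i)}^{-1}$, where the cycles of the permutation $\pi$ induced by $s$ on cosets have lengths $k_i$ summing to $n$. The input from Lemma~\ref{lem:support} is this: for $r\in R$ and $c\in R\cap S$, the element $rcr^{-1}c^{-1}$ lies in $R'\le S'$. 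Hence conjugation by elements of $R$ acts trivially on the image of $R\cap S$ in $H_1(S;\Q)$. The hope is to upgrade this to show that the composite $H_1(S;\Q)\to H_1(E_1;\Q)\to H_1(S;\Q)$ is an isomorphism, or at least injective. That would give the required injectivity.

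\textbf{Main obstacle.} The difficulty is that conjugation by $R$ is known to be trivial only on the classes coming from $R\cap S$, not on all of $H_1(S;\Q)$. One way around it would be to compare ranks. The subgroup $R\cap S$ has finite index in $R$ and contains $R'$, so its image in $H_1(S;\Q)$ is a quotient of $(R\cap S)/R'\otimes\Q\cong H_1(R;\Q)$. It is also a $\Q[S/(R\cap S)]$-module, and $S/(R\cap S)\cong E_1/R$, on which the $R$-conjugations act trivially. The aim would be to show that this image, together with the image of a complement coming from $E_1/R$, spans $H_1(S;\Q)$. Then the transfer composite would be multiplication by $n$ on the first part and injective on the second.

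If this fails, the fallback is a contradiction argument. Suppose $S\ne E_1$. Choose $c\in(R\cap S)\setminus S'$; such an element exists because otherwise $R\cap S\le S'$, which would force the rational rank count above to collapse. Then exploit the conclusion from the proof of Lemma~\ref{lem:support}: the closed paths labeled by a word for $c$ carry the same nonzero chain $\alpha$ from every vertex of the finite graph $\Gamma_S$. Passing to powers $c^k$ and to conjugates $gcg^{-1}$ with $g\in E_1$ should then yield a contradiction with Lemma~\ref{lem:support} or with the Schreier index formula.
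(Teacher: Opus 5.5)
Your reduction to $E_1=RS$ and to Lemma~\ref{lem:homology} is sound. But it moves the whole content of the lemma into the injectivity of $\iota_*\colon H_1(S;\Q)\to H_1(E_1;\Q)$, and by Lemma~\ref{lem:homology} that injectivity is equivalent to $S=E_1$. The proposal never proves it, so as written there is a genuine gap at the central step:
\begin{itemize}
\item The injectivity is stated only as a hope, and the spanning statement is posed as an aim.
\item You assert that the transfer composite is multiplication by $n$ on classes from $R\cap S$, but you omit the observation that makes this true.
\item ``Injective on the second part'' is not the property you need. Injectivity on a complement of $W$ does not give injectivity of the whole map.
\item The fallback is not an argument. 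The rank remark does not justify the existence of $c\in(R\cap S)\setminus S'$. Moreover, Lemma~\ref{lem:support} only bounds $[E_1:S]$, which is already known to be finite, so it cannot produce a contradiction.
\end{itemize}

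The obstacle you identify is not real, and your transfer idea closes the gap as follows. Let $W\subseteq H_1(S;\Q)$ be the span of the classes $[c]$ with $c\in R\cap S$. Let $p\colon H_1(S;\Q)\to H_1(E_1/R;\Q)$ be induced by $S\to S/(R\cap S)\cong E_1/R$.
\begin{itemize}
\item The kernel of $S/S'\to(S/(R\cap S))^{ab}$ is $(R\cap S)S'/S'$, and tensoring with $\Q$ is exact, so $\ker p=W$. The spanning you wanted is therefore automatic.
\item With coset representatives $t_1,\dots,t_n\in R$ and $T$ the transfer, $T\iota_*[s]=\sum[t_is^{k_i}t_i^{-1}]$, summed over one representative $t_i$ per cycle of $s$ on the cosets. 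The last factor is $t_i^{-1}$, not $t_{\pi(i)}^{-1}$.
\item For $c\in R\cap S$, the relation $t_ict_i^{-1}c^{-1}\in R'\le S'$ gives $St_ic=St_i$. So every cycle has length one, and $T\iota_*[c]=\sum_i[t_ict_i^{-1}]=n[c]$.
\item For arbitrary $s\in S$, each $t_i$ maps to $1$ in $E_1/R$, so $p(T\iota_*[s])=\sum k_i\,p[s]=n\,p[s]$.
\item Hence if $T\iota_*(x)=0$, then $np(x)=0$, so $x\in W$. Then $nx=T\iota_*(x)=0$, so $x=0$.
\end{itemize}
Thus $\iota_*$ is injective, and Lemma~\ref{lem:homology}, using $\core_{E_1}(S)\ne1$, gives $S=E_1$.

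Completed this way, your route genuinely differs from the paper's. The paper passes to $\Omega=E/\core_E(S)$ and uses the five-term sequence. It then combines the relation-module decomposition $V\cong\Q\oplus(\Q\Omega)^{\rank E-1}$ with $I_A^2=I_A$ to force $RN/N\le S/N$. The transfer argument avoids relation modules altogether. Because Lemma~\ref{lem:homology} holds in any rank, it also avoids the separate reduction to finite rank. The price is that it relies on the Karrass--Solitar finite-index theorem through Lemma~\ref{lem:homology}.
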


\begin{proof}
Set
\[
N=\core_E(S)=\bigcap_{g\in E}g^{-1}Sg,
\qquad \Omega=E/N,\qquad P=S/N,\qquad A=RN/N.
\]
The group $\Omega$ is finite. Consider
\[
V=(N/N')\otimes_{\Z}\Q
\]
as a right $\Q\Omega$-module with the action induced by conjugation.
For $B\le\Omega$, let $I_B$ denote the augmentation ideal
of $\Q B$, and let $VI_B$ denote the linear span of the elements
$v(b-1)$, where $v\in V$ and $b\in B$.

The five-term exact sequence in homology
\cite[p.~47]{Brown} for the extension corresponding to $N\triangleleft S$ gives
\[
H_2(P;\Q)\longrightarrow V/VI_P
\longrightarrow H_1(S;\Q)
\longrightarrow H_1(P;\Q)\longrightarrow0.
\]
Since $P$ is finite, $H_1(P;\Q)=H_2(P;\Q)=0$.
Hence
\begin{equation}\label{eq:coinvariants}
\ker\bigl(V\longrightarrow H_1(S;\Q)\bigr)=VI_P.
\end{equation}

Suppose now that $n=\rank E<\infty$.
Tensoring the exact sequence for the relation module
\cite[p.~470]{Blackburn} with $\Q$ gives
\begin{equation}\label{eq:relation-sequence}
0\longrightarrow V\longrightarrow(\Q\Omega)^n
\longrightarrow I_\Omega\longrightarrow0.
\end{equation}
The semisimplicity of $\Q\Omega$ and the decomposition
$\Q\Omega\cong\Q\oplus I_\Omega$ yield
\begin{equation}\label{eq:regular-summand}
V\cong\Q\oplus(\Q\Omega)^{n-1}.
\end{equation}
Since $n\ge2$, there exists a surjective homomorphism
of right $\Q\Omega$-modules
\[
\pi:V\longrightarrow\Q\Omega.
\]

The normality of $N$ and $R$ in $E$ implies
\[
[N,R]\le N\cap R,
\qquad [[N,R],R]\le N\cap R'\le N\cap S'.
\]
Let $u\in N$, $r_1,r_2\in R$, $a_i=r_iN\in A$ and
$v=(uN')\otimes1$. The image of $[[u,r_1],r_2]$ in $V$ is
\[
v(a_1-1)(a_2-1).
\]
These elements span $VI_A^2$, and their images
in $H_1(S;\Q)$ vanish. By~\eqref{eq:coinvariants},
we obtain
\begin{equation}\label{eq:ideal-square}
VI_A^2\subseteq VI_P.
\end{equation}
For the finite group $A$, we have $I_A^2=I_A$.
Indeed,
\[
e_A=\frac1{|A|}\sum_{a\in A}a,
\qquad I_A=\Q A(1-e_A),
\qquad (1-e_A)^2=1-e_A.
\]
Thus~\eqref{eq:ideal-square} gives $VI_A\subseteq VI_P$.
Since $\pi(VI_B)=\Q\Omega I_B$ for every $B\le\Omega$,
we obtain
\begin{equation}\label{eq:coset-ideals}
\Q\Omega I_A\subseteq\Q\Omega I_P.
\end{equation}

We have a natural isomorphism of $\Q$-vector spaces
\[
\Q\Omega/\Q\Omega I_P\cong \Q[\Omega/P].
\]
For $a\in A$, it follows from~\eqref{eq:coset-ideals} that
$a-1\in\Q\Omega I_P$, so $aP=P$.
Therefore $A\le P$, and hence $R\le S$.

It remains to consider the case $\rank E=\infty$.
For each $r\in R$, choose a free factor $L\le E$
of finite rank at least two containing $r$. Then
\[
R\cap L\triangleleft L,
\qquad [L:S\cap L]\le[E:S]<\infty.
\]
By~\eqref{eq:free-factor},
\[
(R\cap L)'=R'\cap L
\le S'\cap L=(S\cap L)'.
\]
Applying the case already proved to $R\cap L$ and $S\cap L$, we obtain
$r\in S\cap L$. Thus $R\le S$.
\end{proof}

\section{Proof of the main theorem}

\begin{proof}[Proof of Theorem~\ref{thm:main}]
The case $R=1$ is immediate; henceforth assume $R\ne1$.
Then $R$ is noncyclic, so $R'\ne1$. Set
\[
E=RS,\qquad C=R\cap S.
\]
Then $E$ is a noncyclic free group and $R\triangleleft E$.

If $C\le S'$, then $(S/C)'=S'/C$. Moreover,
\[
S/C\cong E/R.
\]
Hence
\[
H_1(S;\Q)\cong H_1(S/C;\Q)\cong H_1(E/R;\Q).
\]
This isomorphism is induced by the composite
$S\hookrightarrow E\twoheadrightarrow E/R$. Consequently, the induced homomorphism
\[
H_1(S;\Q)\longrightarrow H_1(E;\Q)
\]
is a monomorphism.
Furthermore, $1\ne R'\triangleleft E$ and $R'\le S$.
Lemma~\ref{lem:homology}, applied with $H=S$ and $D=R'$,
gives $S=E$, whence $R\le S$.

If $C\nleq S'$, then Lemma~\ref{lem:support}
shows that the index $[E:S]$ is finite. Applying Lemma~\ref{lem:finite-index},
we again obtain $R\le S$.
\end{proof}

\begin{remark}
The normality hypothesis on $R$ in Theorem~\ref{thm:main} is essential.
In Dunwoody's example~\cite[p.~153]{Dun}, one takes
\[
F=\langle a,b\rangle,\qquad
U=\langle ba,b^2\rangle,\qquad
W=\langle a,b^2,bab^{-1}\rangle.
\]
The subgroup $W$ is the kernel of the homomorphism
$F\to\Z/2\Z$ defined by $a\mapsto0$, $b\mapsto1$.
Since $W'\triangleleft F$ and
\[
[ba,b^2]=[a,b^2]\in W',
\]
the image of $U$ in $F/W'$ is abelian, so $U'\le W'$.
However, $ba\notin W$, and hence $U\nleq W$.

The hypothesis that $F$ is noncyclic is also essential: for
$F=R=\langle a\rangle$ and $S=\langle a^2\rangle$,
we have $R'=S'=1$, but $R\nleq S$.
\end{remark}

\end{document}